\documentclass[12pt]{amsart}

\usepackage{tgpagella}
\usepackage{euler}
\usepackage[T1]{fontenc}
\usepackage{amsmath, amssymb}
\usepackage[hidelinks]{hyperref}
\usepackage[english]{babel}
\usepackage{mathrsfs}
\usepackage{eucal}
\usepackage[all]{xy}
\usepackage{tikz}
\usepackage{todonotes}
\newtheorem{thm}{Theorem}[section]
\newtheorem*{thm*}{Theorem}
\newtheorem{lem}[thm]{Lemma}
\newtheorem{fact}[thm]{Fact}

\newtheorem{prop}[thm]{Proposition}
\newtheorem*{prop*}{Proposition}

\newtheorem{cor}[thm]{Corollary}
\newtheorem*{cor*}{Corollary}

\theoremstyle{definition}

\newtheorem*{defn*}{Definition}

\newtheorem{remark}[thm]{Remark}

\newtheorem*{question*}{Question}
\newtheorem*{Pquestion*}{Popa's question}

\newtheorem*{conv*}{Convention}

\newcommand{\N}{\mathbb{N}}

\newcommand{\norm}[1]{{\left\lVert #1\right\rVert}}

\def\bb{\mathbb}

\def\vp{\varphi}

\def\bb{\mathbb}

\def\sg{\sigma}

\def\cal{\mathcal}

\def\u{\mathsf 1}
\def\M{\mathcal{M}}

\makeatletter

\def\dotminussym#1#2{%
  \setbox0=\hbox{$\m@th#1-$}%
  \kern.5\wd0%
  \hbox to 0pt{\hss\hbox{$\m@th#1-$}\hss}%
  \raise.6\ht0\hbox to 0pt{\hss$\m@th#1.$\hss}%
  \kern.5\wd0}

\DeclareMathOperator{\supp}{supp}

\newcommand\cU{{\cal U}}

\def \u{\mathcal U}

\allowdisplaybreaks
\makeatletter
\def\l@subsection{\@tocline{2}{0pt}{2.5pc}{5pc}{}}
\def\l@subsubsection{\@tocline{2}{0pt}{5pc}{7.5pc}{}}
\makeatother

\begin{document}

%%%%%%%%%%%%%%%%%%%%%%%%%%%%%%%%%%%%%%%%%%%%%%

\title{Ultrapowers of spectral subspaces}

\author{Hiroshi Ando}
\address{}
\email{hiroando@math.s.chiba-u.ac.jp}
\thanks{Ando was partially supported by Japan Society for the Promotion of Sciences (JSPS) KAKENHI 25K07024}
\author{Isaac Goldbring}
\address{Department of Mathematics\\University of California, Irvine, 340 Rowland Hall (Bldg.\# 400),
Irvine, CA 92697-3875}
\email{isaac@math.uci.edu}
\urladdr{http://www.math.uci.edu/~isaac}
\thanks{Goldbring was partially supported by NSF grant DMS-2054477.}

\begin{abstract}
We prove, for any W$^*$-probability space $(M,\varphi)$ where $M$ is a type III$_1$ factor, any nontrivial, proper closed $F\subseteq \bb R$, and any nonprincipal ultrafilter $\u$ on $\bb N$, that the ultrapower $M(\sigma^\varphi,F)^\u$ of the spectral subspace $M(\sigma^\varphi,F)$ is a proper subset of the spectral subspace $M^\u(\sigma^{\varphi^\u},F)$.  We discuss the model-theoretic implications of this result.
\end{abstract}

\maketitle

\section{Introduction}

The main result of this paper is in response to the recent paper \cite{AGHS25} of Arulseelan, Hart, Sinclair, and the second author detailing a model-theoretic framework for \textbf{W$^*$-probability spaces}.  Let us begin with some context.

By a W$^*$-probability space we mean a pair $(M,\varphi)$ with $M$ a von Neumann algebra and $\varphi$ a faithful, normal state on $M$.  A model theoretic treatment of such pairs in the case that $\varphi$ is a trace was given in \cite{farah2014modelII}; a tracial von Neumann algebra $(M,\varphi)$ is viewed as a many-sorted structure whose sorts are given by operator norm bounded balls equipped with the metric stemming from the norm $\|x\|_\tau:=\sqrt{\varphi(x^*x)}$.  

Inspired by this framework, in \cite{dabrowski2019continuous} Dabrowski gave a model-theoretic treatment of an arbitrary W$^*$-probability space, again using operator norm bounded balls as the sorts, but this time equipped with the norm $$\|x\|_\varphi^\#: = \sqrt{\frac{\varphi(x^*x) + \varphi(xx^*)}{2}}$$ so as to have uniform continuity of the adjoint operation.  However, when $\varphi$ is not a trace, multiplication is not uniformly continuous when restricted to such a ball and thus cannot be included as a primitive in the language.  For that reason, Dabrowski works with ``smeared'' versions of multiplication that are uniformly continuous.

In \cite{AGHS25}, the authors wished to have honest multiplication as a primitive in the language and thus abandoned the use of operator norm bounded balls in favor of balls given by \textbf{totally bounded elements}.  Here, an element $a\in M$ is said to be \textbf{$K$-totally bounded} (with respect to $\varphi$) if both left and right multiplication by both $a$ and $a^*$ extend to bounded maps on $L^2(M,\varphi)$ of operator norm at most $K$ while $a$ is said to be totally bounded if it is $K$-totally bounded for some $K>0$.  The totally bounded elements are strongly dense in $M$, and thus using the $K$-totally bounded elements as sorts (as $K$ ranges over $\bb N$) ended up yielding a model-theoretic framework for W$^*$-probability algebras that allowed for multiplication in the language.  It is worth pointing out that the proof that this treatment actually works was quite nontrivial and used a number of technical ideas from Tomita-Takesaki theory as well as some ideas of Kadison.

In this paper, we suggest (and ultimately refute) an alternate proposal for a model-theoretic treatment of W$^*$-probability spaces, this time using \textbf{spectral subspaces} as sorts.  The precise definition of the spectral subspace $M(\sigma^\varphi,F)$ will be given in the next section, but suffice it to say that these spaces play a crucial role in the study of type III factors and the collection $\bigcup_{a>0}M(\sigma^\varphi,[-a,a])$ is strongly dense in $M$, whence at first glance it seems somewhat feasible that these sets could be used as sorts in a model-theoretic description of W$^*$-probability spaces.

A minimum requirement for an assignment $(M,\varphi)\mapsto S(M,\varphi)$ to be considered as a sort is that it is a \textbf{definable set}, which, operator-algebraically, means that $S(\prod_\u (M_i,\varphi))=\prod_\u S(M_i,\varphi)$ for any family $(M_i,\varphi_i)_{i\in I}$ of W$^*$-probability spaces and any ultrafilter $\u$ on $I$; here, and throughout this note, the ultraproduct is the \textbf{Ocneanu ultraproduct} \cite{AH14,Ocneanu85}. Our main result is that this commutation with ultraproducts does not generally hold in the case of spectral subspaces.  In fact, we show something stronger:

\begin{thm*}[Theorem \ref{prop Arvesonisundefinable}]
For any $\sigma$-finite type III$_1$ factor $M$, any faithful normal state $\varphi$ on $M$, any nonprincipal ultrafilter $\u$ on $\mathbb{N}$, and any nontrivial, proper closed subset $F\subsetneq \bb R$, we have that $M^\u(\sigma^{\varphi^\u},F)\not=M(\sigma^\varphi,F)^\u$. 
\end{thm*}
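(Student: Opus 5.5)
The plan is to exhibit an explicit element of $M^\u(\sigma^{\varphi^\u},F)$ that stays at a fixed $\|\cdot\|_\varphi$-distance from every element of $M(\sigma^\varphi,F)$ at each level $n$. Since $F$ is closed, nonempty and proper, it has a boundary point $\mu\in F$ that is a limit of points of $\bb R\setminus F$. I choose compact intervals $J_n\subseteq \bb R\setminus F$ with $J_n\to\{\mu\}$ (say $J_n\subseteq(\mu-1/n,\mu+1/n)$). I will use the standard facts that $\sigma^{\varphi^\u}_t=(\sigma^\varphi_t)^\u$ on the Ocneanu ultrapower (Ando--Haagerup). I will also use that $x\in M(\sigma^\varphi,E)$ for closed $E$ implies $x\xi_\varphi\in 1_E(\log\Delta_\varphi)L^2(M,\varphi)$ (up to the sign convention for the spectrum). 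The easy inclusion $M(\sigma^\varphi,F)^\u\subseteq M^\u(\sigma^{\varphi^\u},F)$ is not needed, but it explains why the difference must be proper.

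\emph{Step 1 (construction).} Using that $M$ is of type III$_1$, so that the Arveson spectrum of $\sigma^\varphi$ is all of $\bb R$, I pick nonzero $x_n\in M(\sigma^\varphi,J_n)$. I will normalize them so that $\sup_n\|x_n\|\le 1$ and $\|x_n\|_\varphi\ge c>0$ for a fixed $c$. Because the spectrum of $x_n$ lies within $1/n$ of $\mu$, one has $\|\sigma^\varphi_t(x_n)-e^{i\mu t}x_n\|^\#_\varphi\lesssim |t|/n$, uniformly in the normalization. From this I expect to show that $(x_n)_n$ lies in the normalizer $\mathcal M^\u(M)$ defining the Ocneanu ultrapower, via the usual criterion involving elements that are equicontinuous for the modular flow. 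Then $x=(x_n)^\u$ satisfies $\sigma_t^{\varphi^\u}(x)=e^{i\mu t}x$. Hence $x\in M^\u(\sigma^{\varphi^\u},\{\mu\})\subseteq M^\u(\sigma^{\varphi^\u},F)$, and $x\ne0$ since $\varphi^\u(x^*x)=\lim_\u\|x_n\|_\varphi^2\ge c^2$.

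\emph{Step 2 (separation).} Suppose $x=(y_n)^\u$ with $y_n\in M(\sigma^\varphi,F)$. Then $\lim_\u\|x_n-y_n\|_\varphi=0$. On the other hand, $x_n\xi_\varphi\in 1_{J_n}(\log\Delta_\varphi)L^2$ and $y_n\xi_\varphi\in 1_F(\log\Delta_\varphi)L^2$, and these subspaces are orthogonal because $J_n\cap F=\emptyset$. Therefore $\|x_n-y_n\|_\varphi\ge\|x_n\xi_\varphi\|=\|x_n\|_\varphi\ge c$, a contradiction. So $x\notin M(\sigma^\varphi,F)^\u$. This step uses only spectral projections of $\log\Delta_\varphi$, which are contractions on $L^2$. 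This avoids the $L^1$-norm blow-up one would get from smoothing by $\sigma_f$ with $\hat f$ separating $J_n$ from $F$.

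\emph{Main obstacle.} The main difficulty is the uniform normalization in Step 1: making $\|x_n\|\le 1$ while keeping $\|x_n\|_\varphi$ bounded below as $J_n$ shrinks. Nonzero elements of $M(\sigma^\varphi,J_n)$ exist, but a priori their $\varphi$-norm could be tiny relative to their operator norm. My first approach is to pass through the ultrapower. By Ando--Haagerup, for type III$_1$ factors the centralizer $(M^\u)_{\varphi^\u}$ is a II$_1$ factor, and $M^\u$ admits, for each $\mu$, a nonzero partial isometry $v$ with $\sigma_t^{\varphi^\u}(v)=e^{i\mu t}v$. Using factoriality of the centralizer and standard comparison of projections, I can amplify it so that $\varphi^\u(v^*v)\ge c$. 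I then lift $v$ to a representing sequence $(v_n)$ with $\|v_n\|\le1$, and replace $v_n$ by $x_n=\sigma^\varphi_{f_n}(v_n)$, with $\hat f_n$ supported in $J_n$, a slight shift of a Fej\'er-type kernel with $\|f_n\|_1$ bounded. I expect to check that $\lim_\u\|x_n\|_\varphi>0$ using the eigen-equation of $v$, choosing the $J_n$ slowly enough along $\u$ by a diagonal argument. If this fails, the fallback is to use Connes--St{\o}rmer transitivity to produce directly approximate eigenoperators in $M$ with controlled $\varphi$-norm.
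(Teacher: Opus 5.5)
The overall architecture of your proposal matches the paper's: intervals $J_n\subseteq\mathbb R\setminus F$ shrinking to a boundary point, uniformly bounded $x_n\in M(\sigma^\varphi,J_n)$ with $\|x_n\|_\varphi\ge c$, and the limit lying in $M^{\mathcal U}(\sigma^{\varphi^{\mathcal U}},\{\mu\})$. Your Step 2 is exactly the paper's orthogonality argument. The gap is in the construction of the $x_n$. You correctly identify it as the crux, but the method you propose cannot work.

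You start from a single eigen-partial isometry $v$ at the boundary point $\mu\in F$. You then smear a lift of $v$ by $f_n$ with $\operatorname{supp}\hat f_n\subseteq J_n$. Since $\mu\notin J_n$, the eigen-equation gives $\sigma^{\varphi^{\mathcal U}}_{f_n}(v)=\hat f_n(\mu)v=0$, so it works against you rather than for you. Concretely, if $\mu=0$ nothing in your recipe excludes $v=1$, and then $x_n=\hat f_n(0)\cdot 1=0$.

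More generally, suppose $v=(y_k)_{\mathcal U}$ with $y_k\in M(\sigma^\varphi,F)$. Since $\hat f_n$ vanishes on $F$, for any bounded lift $(v_k)$ of $v$ and any choice of the $J_n$,
$\|\sigma^\varphi_{f_n}(v_n)\|_\varphi=\|\hat f_n(\log\Delta_\varphi)(v_n-y_n)\xi_\varphi\|\le\|f_n\|_1\|v_n-y_n\|_\varphi\to0$ along $\mathcal U$.
So your Step 1 can only succeed if you already know $v\notin M(\sigma^\varphi,F)^{\mathcal U}$, which is essentially the statement to be proved. No diagonal choice of the $J_n$ repairs this, and the Connes--St{\o}rmer fallback is too vague to fill the hole.

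The missing idea is to use eigenoperators at eigenvalues \emph{outside} $F$, a different one for each $n$, and then diagonalize over a double index.

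\begin{enumerate}
\item Pick $\mu_n\in\operatorname{int}(J_n)$ and a partial isometry $v^{(n)}\in M^{\mathcal U}(\sigma^{\varphi^{\mathcal U}},\{\mu_n\})$ with $\|v^{(n)}\|_{\varphi^{\mathcal U}}$ bounded below uniformly in $n$. The paper uses isometries at negative eigenvalues from Lemma~\ref{lem isometry}; your amplification inside the II$_1$ centralizer would also do.
\item Write $v^{(n)}=(\tilde v_{n,k})_{\mathcal U}$, an ultralimit in $k$, with $\|\tilde v_{n,k}\|\le 1$.
\item Smear with $f_n$ satisfying $\|f_n\|_1=1$, $\hat f_n(\mu_n)=1$ and $\operatorname{supp}\hat f_n\subseteq J_n$, as in Corollary~\ref{lem fourier}.
\item Then $\sigma^{\varphi^{\mathcal U}}_{f_n}(v^{(n)})=v^{(n)}$ and $\sigma^{\varphi^{\mathcal U}}_{f_n}=(\sigma^\varphi_{f_n})^{\mathcal U}$ give $\lim_{k\to\mathcal U}\|\sigma^\varphi_{f_n}(\tilde v_{n,k})\|_\varphi=\|v^{(n)}\|_{\varphi^{\mathcal U}}$.
\item Choose $k_n$ nearly attaining this limit and set $x_n:=\sigma^\varphi_{f_n}(\tilde v_{n,k_n})$.
\end{enumerate}

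These $x_n$ lie in $M(\sigma^\varphi,J_n)$, satisfy $\|x_n\|\le1$, and have $\|x_n\|_\varphi$ bounded below. With this replacement the rest of your argument goes through, using either your eigen-estimate or the paper's route via Proposition~\ref{containment}, followed by your Step 2.
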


Here, $M(\sigma^{\varphi},F)^{\mathcal{U}}$ is the set of all $x\in M^{\mathcal{U}}$ which is represented by a bounded sequence $(x_n)_n$ such that $x_n\in M(\sigma^{\varphi},F)$ for every $n\in \N$. In particular, $$M(\sigma^{\varphi},\mathbb{R})^{\cU}=M^{\mathcal{U}} \text{ and }M(\sigma^{\varphi},\emptyset)^{\cU}=\{0\}.$$ 
In other words, except for the trivial situations $F=\emptyset, \bb R$, the spectral subspace $M(\sigma^\varphi,F)$ is not even a definable subset of $(M,\varphi)$ (let alone a definable set relative to the ambient theory of W$^*$-probability spaces).  We note that part of the main result of \cite{AGHS25} is that taking the set of $K$-totally bounded elements (for a fixed $K$) does commute with ultraproducts.  In some sense, our result validates the choice of totally bounded elements as choice of sorts. 
% \todo[inline]{(HA) Either this ``In some sense,..." or "Our result shows that despite..." should be removed, as they claim similar things. In any case the conclusion should be putted in the end of the paragraph. (IG) I actually think it looks good as it is.  I wouldn't remove either.}

There is a connection between spectral subspaces and totally bounded elements.  Indeed, in our recent preprint \cite{AG2026_bicentralizer}, we showed that if $x\in M(\sigma^\varphi,[-a,a])$ is a contraction, then $x$ is totally $e^{a/2}$-bounded.  It is easy to see that the converse may not hold even for a type I factor. Our result shows that despite the close relationship and similarity between these two classes of operators, totally bounded elements are preferable for the model theoretic approach to von Neumann algebra theory.
% As a corollary of our main result here, we see that the converse of this implication does not hold in general.

Although taking spectral subspaces does not commute with ultraproducts (or even ultrapowers), we show that there is always a containment in one direction:

\begin{thm*}[Proposition \ref{containment}]
For any family $(M_i,\varphi_i)_{i\in I}$ of W$^*$-probability spaces and any ultrafilter $\u$ on $I$, one has that $\prod_\u M_i(\sigma^{\varphi_i},F)\subseteq M(\sigma^\varphi,F)$, where $(M,\varphi):=\prod_\u (M_i,\varphi_i)$.
\end{thm*}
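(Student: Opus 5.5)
Write $x=(x_i)^\u\in \prod_\u M_i(\sigma^{\varphi_i},F)$, with $\sup_i\|x_i\|<\infty$ and $x_i\in M_i(\sigma^{\varphi_i},F)$. The plan is to use the characterization of the Arveson spectrum by integrated operators:
\[
x\in M(\sigma^\varphi,F)\iff \sigma^\varphi_f(x):=\int_{\bb R} f(t)\,\sigma^\varphi_t(x)\,dt=0 \quad\text{for all } f\in L^1(\bb R) \text{ with } \hat f \in C_c(\bb R\setminus F).
\]
It is enough to test such $f$, since $F$ is closed and these functions are enough to separate spectrum. So I will fix such an $f$ and show $\sigma^{\varphi}_f(x)=0$.

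The key input is the Ando--Haagerup theorem \cite{AH14}: the modular group of $\varphi=\varphi^\u$ acts coordinatewise, $\sigma^{\varphi}_t((x_i)^\u)=(\sigma^{\varphi_i}_t(x_i))^\u$, for every $t$. The main obstacle is to pass this through the integral, i.e.\ to show
\[
\sigma^{\varphi}_f(x)=\bigl(\sigma^{\varphi_i}_f(x_i)\bigr)^\u .
\]
Once this holds we are done, because each $\sigma^{\varphi_i}_f(x_i)=0$.

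To get the displayed identity, I first check that the right-hand sequence defines an element of $M^\u$. It is bounded by $\|f\|_1\sup_i\|x_i\|$. Ando--Haagerup also show that the map $t\mapsto \sigma^{\varphi_i}_t(x_i)$ is $\|\cdot\|^\#_{\varphi_i}$-equicontinuous along $\u$ for $(x_i)$ representing an element of $M^\u$; this equicontinuity is exactly their criterion. I expect it to pass to the smeared sequence by averaging, so the right-hand side represents an element.

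To compare the two sides, I will approximate $f$ in $L^1$ by a compactly supported function. The integral over a compact interval is then approximated by Riemann sums $\sum_k c_k\,\sigma_{t_k}$. Each Riemann sum commutes with the ultraproduct by the coordinatewise formula. The errors are controlled uniformly in $i$ in the $\|\cdot\|^\#$ norm, using the uniform equicontinuity of $t\mapsto\sigma^{\varphi_i}_t(x_i)$ on compact sets together with the uniform bound $\|\sigma_t(y)\|\le\|y\|$. Passing to the limit gives the identity in $L^2(M^\u,\varphi^\u)$ and hence in $M^\u$.

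An alternative that avoids the equicontinuity estimate is to argue by duality with normal functionals, but the Riemann-sum route is the one I would try first. The only delicate point is the uniformity over $i$, and it is supplied exactly by the Ando--Haagerup equicontinuity characterization of $M^\u$.
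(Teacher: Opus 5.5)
Your proposal is correct and follows the same route as the paper. You fix $f$ with $\hat f$ supported off $F$, note that $\sigma_f^{\varphi_i}(x_i)=0$, and conclude from the commutation $\sigma_f^{\varphi}(x)=(\sigma_f^{\varphi_i}(x_i))_{\cU}$ together with Fact \ref{fact spectralfacts}(2). The only difference is that the paper simply cites this commutation as \cite[Lemma 4.14]{AH14}, whereas you sketch its proof from the pointwise formula for $\sigma_t^{\varphi}$ via Riemann sums and the Ando--Haagerup equicontinuity criterion; since the right-hand side is the zero sequence here, you do not even need to check separately that it defines an element of the ultraproduct.
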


Model-theoretically, this implies that the spectral subspaces $M(\sigma^\varphi,F)$ form a \textbf{zeroset} relative to the theory of W$^*$-probability spaces.

To keep this note relatively short, we assume that the reader is familiar with basic facts about W$^*$-probability spaces and continuous model theory; they may consult our preprint \cite{AG2026_bicentralizer} for more information.  We do however review the necessary material on spectral subspaces in the next section.

This article was the result of the second author's three week visit to Chiba University as funded by a pilot Supplemental Research Collaboration Opportunity in Japan offered by The Division of Mathematical Sciences and the Office of International Science and Engineering of the National Science Foundation and the Japan Society for the Promotion of Science.  The second author would like to thank both organizations for the opportunity as well as to the first author and Chiba University for their hospitality during his visit.  

\section{Preliminaries}

\subsection{Background on spectral subspaces}

Here, we briefly recall Arveson spectral subspaces \cite{Arveson74}. More details can be found in  \cite[Chapter XI]{takesakiII}. 
We identify the dual group $\widehat{\mathbb{R}}$ of the additive group $\mathbb{R}$ with itself. For $f\in L^1(\mathbb{R})$, we define the Fourier transform $\hat{f}$  by 
\[\hat{f}(\lambda):=\int_{\mathbb{R}}e^{it\lambda}f(t)dt,\ \ \ \ \ \lambda \in \widehat{\mathbb{R}}=\mathbb{R}.\]

For $f\in L^1(\mathbb{R})$, we also consider the function $\sigma_f^\varphi:M\to M$ given by $$\sigma_f^{\varphi}(x):=\int_{\mathbb{R}}f(t)\sigma_t^{\varphi}(x)dt,$$ where the integral is taken in the $\sigma$-weak sense.  We note that $\|\sigma^\varphi_f(a)\|\leq \|f\|_1\|a\|$.

Below, we always let $M$ act on the Hilbert space $L^2(M,\varphi)$ associated with $\varphi$ with $\xi_{\varphi}$ the GNS vector, and $\Delta_{\varphi}$ is defined as the positive self-adjoint operator on $L^2(M,\varphi)$ as usual. 
For $f\in L^1(\mathbb R)$ and $x\in M$, we have 
\begin{equation}\sigma_f^{\varphi}(x)\xi_{\varphi}=\int_{\mathbb{R}}f(t)e^{it\log \Delta_{\varphi}}x\xi_{\varphi}dt=\hat{f}(\log \Delta_{\varphi})x\xi_{\varphi}.\label{eq modular op}
\end{equation}

For $x\in M$, $\text{Sp}_{\sigma^{\varphi}}(x)$ is defined by
\[\left \{\lambda \in \widehat{\mathbb{R}} \ : \ \hat{f}(\lambda)=0 \text{  for all } f\in L^1(\mathbb{R}) \text{  with  }\sigma^{\varphi}_f(x)=0\right \}.\] 

For a closed subset $E$ of $\widehat{\mathbb{R}}$, the \textbf{spectral subspace} of $\sigma^{\varphi}$ corresponding to $E$ is given by 
\[M(\sigma^{\varphi},E):=\{x\in M\ : \ \text{Sp}_{\sigma^{\varphi}}(x)\subseteq E\}.\]
From (\ref{eq modular op}) and spectral theory, it follows that  
\[x\in M(\sigma^{\varphi},E)\iff 1_E(\log \Delta_{\varphi})x\xi_{\varphi}=x\xi_{\varphi}.\]

The set $M_{\varphi}:=M(\sigma^{\varphi},\{0\})$ is the fixed point subalgebra of $M$ under $\sigma^{\varphi}$ and is called the \textbf{centralizer} of $\varphi$.
%\todo{After defining $\sigma_f$, mention $\|\sigma_f^\varphi(a)\|\leq \|f\|\cdot \|a\|$.}

% \begin{lem}\label{lem ArvesonspF}
% Let $M$ be a von Neumann algebra, let $\alpha=(\alpha_t)_{t\in\mathbb R}$ be a
% $\sigma$-weakly continuous one-parameter automorphism group on $M$, and let $F$ be a nonempty closed subset of $\mathbb{R}$. Then the following two conditions are equivalent. 
% \begin{itemize}
%     \item[{\rm (1)}] $x\in M^{\alpha}(F)$. 
%     \item[{\rm (2)}] $\alpha_f(x)=0$ for every $f\in L^1(\mathbb R)$ such that $\operatorname{supp}(\hat{f})\cap F=\emptyset$. 
% \end{itemize}
% \end{lem}

We will repeatedly use the following properties of the Arveson spectrum. 
\begin{fact}\label{fact spectralfacts}
Suppose that $(M,\varphi)$ is a W$^*$-probability space, $F\subseteq \bb R$ is a closed subset, $f\in L^1(\mathbb R)$, and $x\in M$.
\begin{enumerate}
    \item $M(\sigma^\varphi,-F)=M(\sigma^\varphi,F)^*$.
    \item $x\in M(\sigma^\varphi,F)$ if and only if $\sigma_g^\varphi(x)=0$ for every $g\in L^1(\mathbb R)$ such that $\operatorname{supp}(\hat{g})\cap F=\emptyset$. 
    \item If $\operatorname{supp}(\hat f)\subseteq F$, then $\sigma_f^\varphi(x)\in M(\sigma^\varphi,F)$.
    \item If $x\in M(\sigma^\varphi,F)$ and $\hat{f}=1$ on $F$, then $\sigma_f^\varphi(x)=x$.
    \item For $x\in M$ and $\lambda>0$, $x\in M(\sigma^{\varphi},\{\log \lambda\})$ if and only if $\varphi x=\lambda x\varphi$.
\end{enumerate}    
\end{fact}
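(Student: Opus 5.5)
The plan is to reduce every item to the spectral characterization $x\in M(\sigma^{\varphi},E)\iff 1_E(\log\Delta_\varphi)x\xi_\varphi=x\xi_\varphi$ together with (\ref{eq modular op}). Since $\xi_\varphi$ is separating for $M$ (because $\varphi$ is faithful), identities between elements of $M$ can then be checked on $\xi_\varphi$. Two further elementary rules will be used throughout. First, $\sigma^\varphi_g\circ\sigma^\varphi_f=\sigma^\varphi_{g*f}$ and $\widehat{g*f}=\hat g\hat f$. Second, $\sigma^\varphi_f(x)^*=\sigma^\varphi_{\bar f}(x^*)$ and $\hat{\bar f}(\lambda)=\overline{\hat f(-\lambda)}$.

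For (1), the rules just mentioned show that $\sigma_f^\varphi(x)=0$ if and only if $\sigma^\varphi_{\bar f}(x^*)=0$. Hence the $g\in L^1(\mathbb{R})$ annihilating $x^*$ are exactly the $\bar f$ with $f$ annihilating $x$, and their Fourier transforms are reflected and conjugated. This gives $\operatorname{Sp}_{\sigma^\varphi}(x^*)=-\operatorname{Sp}_{\sigma^\varphi}(x)$, and (1) follows.

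For (2), we argue in both directions.
\begin{itemize}
\item Suppose $x\in M(\sigma^\varphi,F)$ and $\operatorname{supp}\hat g\cap F=\emptyset$. Then
\[\sigma_g^\varphi(x)\xi_\varphi=\hat g(\log\Delta_\varphi)1_F(\log\Delta_\varphi)x\xi_\varphi=0,\]
so $\sigma^\varphi_g(x)=0$ because $\xi_\varphi$ is separating.
\item Conversely, suppose all such $\sigma_g^\varphi(x)$ vanish, and let $\lambda\notin F$. Choose $\hat g\in C_c^\infty$ supported in a small interval around $\lambda$ that misses $F$, with $\hat g(\lambda)\neq0$. Such a $g$ exists in $L^1$, indeed it is Schwartz, since the inverse Fourier transform of a $C_c^\infty$ function is Schwartz. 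This $g$ annihilates $x$ but $\hat g(\lambda)\ne0$, so $\lambda\notin\operatorname{Sp}_{\sigma^\varphi}(x)$.
\end{itemize}
Item (3) then follows from (2). If $\operatorname{supp}\hat g\cap F=\emptyset$, then $\hat g\hat f=0$, so $\sigma^\varphi_g(\sigma^\varphi_f(x))=\sigma^\varphi_{g*f}(x)=0$. Item (4) is the computation
\[(\sigma_f^\varphi(x)-x)\xi_\varphi=(\hat f-1)(\log\Delta_\varphi)1_F(\log\Delta_\varphi)x\xi_\varphi=0,\]
followed once more by separation.

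Item (5) is the only step with real content, and I expect its converse direction to be the main obstacle. Note that $x\in M(\sigma^\varphi,\{\log\lambda\})$ means exactly that $x\xi_\varphi$ is an eigenvector, $\Delta_\varphi x\xi_\varphi=\lambda x\xi_\varphi$. Write $S=J\Delta_\varphi^{1/2}$. For $y\in M$ the identity $\langle Sa\xi_\varphi,Sb\xi_\varphi\rangle=\langle b\xi_\varphi,a\xi_\varphi\rangle$ gives
\[\varphi(xy)=\langle y\xi_\varphi,x^*\xi_\varphi\rangle=\langle \Delta_\varphi^{1/2}x\xi_\varphi,\Delta_\varphi^{1/2}y^*\xi_\varphi\rangle,\qquad \lambda\varphi(yx)=\langle \lambda x\xi_\varphi,y^*\xi_\varphi\rangle.\]
If $\Delta_\varphi x\xi_\varphi=\lambda x\xi_\varphi$, the two right-hand sides agree, so $\varphi x=\lambda x\varphi$. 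Conversely, if $\varphi x=\lambda x\varphi$, the displayed equality says that the functional $\eta\mapsto\langle\Delta_\varphi^{1/2}x\xi_\varphi,\Delta_\varphi^{1/2}\eta\rangle$ agrees with $\eta\mapsto\langle\lambda x\xi_\varphi,\eta\rangle$ on $M\xi_\varphi$. Since $M\xi_\varphi$ is a core for $\Delta_\varphi^{1/2}$, this identity extends to the whole domain of $\Delta_\varphi^{1/2}$, so the first functional is bounded there. Hence $\Delta_\varphi^{1/2}x\xi_\varphi\in\operatorname{dom}(\Delta_\varphi^{1/2})$ and $\Delta_\varphi x\xi_\varphi=\lambda x\xi_\varphi$, which is the desired eigenvector condition.
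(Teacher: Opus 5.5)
Your proof is correct, but it does not follow the paper, because the paper gives no proof of this Fact. It treats the Fact as standard Arveson spectral theory for a general $\sigma$-weakly continuous flow, citing Arveson and Takesaki, Chapter XI. It records only the characterization $x\in M(\sigma^\varphi,E)\iff 1_E(\log\Delta_\varphi)x\xi_\varphi=x\xi_\varphi$, as a consequence of (\ref{eq modular op}) and spectral theory.

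You make that characterization the engine of the proof. Since $\sigma^\varphi$ is implemented by $\Delta_\varphi^{it}$ and $\xi_\varphi$ is separating, items (2)--(4) reduce to Borel functional calculus for the single operator $\log\Delta_\varphi$. This avoids the ideal-theoretic harmonic analysis in $L^1(\mathbb R)$ that the general theory needs, for instance showing $\sigma_g(x)=0$ whenever $\hat g$ vanishes near the hull of $\{f:\sigma_f(x)=0\}$. Your proof of (5) is likewise a direct domain argument with the core $M\xi_\varphi$ of $\Delta_\varphi^{1/2}$, instead of the usual appeal to the KMS condition.

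Your route has two costs:
\begin{itemize}
\item It relies on an invariant cyclic separating vector, so it is specific to modular flows and other state-preserving actions, whereas the cited theory works for arbitrary flows.
\item It leans on the characterization above, which is where the real content sits. That characterization amounts to $\operatorname{Sp}_{\sigma^\varphi}(x)$ being the support of the spectral measure of $\log\Delta_\varphi$ in the vector $x\xi_\varphi$. Proving it uses the same compactly supported Fourier transform argument as your converse of (2).
\end{itemize}
Since the paper asserts the characterization before stating the Fact, using it here is legitimate.

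One slip needs fixing in (5). The identity $\langle Sa\xi_\varphi,Sb\xi_\varphi\rangle=\langle b\xi_\varphi,a\xi_\varphi\rangle$ is false in general. With your conventions the left side is $\varphi(ba^*)$ and the right side is $\varphi(a^*b)$, so the identity would force $\varphi$ to be tracial. Because $J$ is antiunitary, the correct identity is $\langle Sa\xi_\varphi,Sb\xi_\varphi\rangle=\langle\Delta_\varphi^{1/2}b\xi_\varphi,\Delta_\varphi^{1/2}a\xi_\varphi\rangle$. Taking $a=y^*$ and $b=x$ gives exactly your displayed formula for $\varphi(xy)$, so the rest of your argument for (5) goes through unchanged.
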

We will also use the following fact (see \cite{AH14,katznelsonharmonic} for details on summability kernels and its use). 
\begin{fact}
For $a>0$, the {\bf Fej\'er kernel} $F_a:\mathbb{R}\to \mathbb{R}$ is defined by 
\[F_a(t):=\begin{cases}\dfrac{1-\cos (at)}{\pi at^2} & t\neq 0\\
\ \ \ a/2\pi & t=0. \end{cases}\]
Its Fourier transform is given by
\[\widehat{F_a}(\lambda)=\int_{\mathbb{R}}e^{it\lambda}F_a(t)dt=\begin{cases}1-\dfrac{|\lambda|}{a} & |\lambda|\le a\\
 \ \ \ 0 & |\lambda|>a.\end{cases}\]
It holds that $F_a\geq 0$ and $\|F_a\|_1=\widehat{F}_a(0)=1$.
In particular, we have $\sigma_{F_a}^{\varphi}(x)\in M(\sigma^{\varphi},[-a,a])$ for every $a>0$ and $x\in M$.

\begin{cor}\label{lem fourier}
    Let $I$ be a closed interval in $\mathbb R$ and let $s$ be an interior point of $I$. Then there exists $f\in L^1(\mathbb R)$ such that $\hat{f}(s)=1$, $\|f\|_1=1$, and $\operatorname{supp}(\hat{f})\subset I$. 
\end{cor}
\begin{proof}
Choose $r>0$ such that $[s-r,s+r]\subset I$, and set
\[
 f(t):=e^{-ist}F_r(t).
\]
Then
\[
 \widehat f(\lambda)=\widehat{F_r}(\lambda-s),
\]
so $\widehat f(s)=1$ and $\supp(\widehat f)\subset [s-r,s+r]\subset I$.  Since $\norm{F_r}_1=1$, $\norm f_1=1$ holds.
\end{proof}
\end{fact}
The next result follows from \cite[Lemma 4.13]{AH14}. 
\begin{fact}\label{fact spectralultrafact}
Suppose that $(M_i,\varphi_i)_{i\in I}$ is a family of W$^*$-probability spaces, $\u$ is an ultrafilter on $I$, and $F\subseteq \mathbb{R}$ is compact.  Suppose that, for each $i\in I$, $x_i\in M_i(\sigma^{\varphi_i},F)$ and $\sup_{i\in I}\|x_i\|<\infty$.  Then $(x_i)_{i\in I}$ represents an element $x=(x_i)_\u\in \prod_\u (M_i,\varphi_i)$.
\end{fact}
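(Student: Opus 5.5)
The plan is to verify directly the defining condition of the Ocneanu ultraproduct. The sequence $(x_i)_i$ is bounded, so it suffices to show that it lies in the two-sided normalizer of the ideal $\mathcal I_\u$ of bounded sequences converging to $0$ $*$-strongly along $\u$. Concretely, if $(y_i)_i$ is bounded with $\|y_i\|^\#_{\varphi_i}\to 0$ along $\u$, I will show that $\|x_iy_i\|^\#_{\varphi_i}\to 0$ and $\|y_ix_i\|^\#_{\varphi_i}\to 0$ along $\u$. Since $\|z\|^\#$ controls both $\|z\|_\varphi$ and $\|z^*\|_\varphi$, and since $x_i^*\in M_i(\sigma^{\varphi_i},-F)$ by Fact \ref{fact spectralfacts}(1) with $-F$ again compact, it is enough to prove estimates of the form
\[
\|x y\|_{\varphi}\le \|x\|\,\|y\|_\varphi,\qquad \|yx\|_\varphi\le C_F\|x\|\,\|y\|_\varphi .
\]
These should hold for every W$^*$-probability space $(M,\varphi)$, every $x\in M(\sigma^\varphi,F)$ and every $y\in M$, with a constant $C_F$ depending only on $F$. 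The first estimate is trivial. For the second, $y_i^*$ and $x_i^*$ play the same role, so I apply it to the pairs $(x_i,y_i)$, $(x_i^*,y_i^*)$ and so on.

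For the right-multiplication estimate I will use analyticity. Fix once and for all an $h\in L^1(\mathbb R)$ such that $\hat h$ is smooth, compactly supported, and equal to $1$ on a neighborhood of $F$; such an $h$ exists because $F$ is compact. By Fact \ref{fact spectralfacts}(4), $x=\sigma^\varphi_h(x)$. By Paley--Wiener, $h$ extends to an entire function, and $h_z(t):=h(t-z)$ is integrable for each $z$ in the strip $|\operatorname{Im} z|\le 1/2$, with $\sup_{|\operatorname{Im}z|\le 1/2}\|h_z\|_1=:C_F<\infty$. A contour shift then shows that $t\mapsto\sigma_t^\varphi(x)$ extends analytically, with $\sigma^\varphi_z(x)=\sigma^\varphi_{h_z}(x)$. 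Hence $\|\sigma^\varphi_{-i/2}(x)\|\le C_F\|x\|$.

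From Tomita--Takesaki theory, $x\xi_\varphi=J\sigma^\varphi_{-i/2}(x)^*J\xi_\varphi$, and $J\sigma_{-i/2}^\varphi(x)^*J\in M'$. Therefore
\[
yx\xi_\varphi=J\sigma^\varphi_{-i/2}(x)^*J\,y\xi_\varphi ,
\]
which gives $\|yx\|_\varphi\le C_F\|x\|\|y\|_\varphi$. Combining this with the trivial left estimate applied to the adjoints yields the required convergence to $0$ along $\u$, since $\sup_i\|x_i\|<\infty$ and $C_F$ does not depend on $i$. This shows that $(x_i)_i$ lies in the two-sided normalizer of $\mathcal I_\u$, and hence that it represents an element of $\prod_\u(M_i,\varphi_i)$.

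The main obstacle is the uniformity in $i$ of the bound on $\|\sigma^{\varphi_i}_{-i/2}(x_i)\|$. This is exactly where compactness of $F$ enters: it is what allows a single kernel $h$ to serve for all $i$. The point needing care is justifying that $\sigma_{h_z}^\varphi(x)$ is the analytic continuation, which requires checking the contour shift with $\sigma$-weak integrals. As an alternative, I could instead cite the result of \cite{AG2026_bicentralizer} that contractions in $M(\sigma^\varphi,[-a,a])$ are totally $e^{a/2}$-bounded, taking $F\subseteq[-a,a]$; this gives the uniform constant immediately.
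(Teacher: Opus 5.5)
Your argument is correct. The paper gives no proof of its own here: it simply appeals to \cite[Lemma 4.13]{AH14}. What you have written is therefore a self-contained proof of the cited fact, and it checks the defining normalizer condition of the Ocneanu ultraproduct directly.

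Your mechanism has two steps. First, a single kernel $h$, available because $F$ is compact, gives a bound on $\|\sigma^\varphi_z(x)\|$ over the strip $|\operatorname{Im} z|\le 1/2$ that is uniform in the algebra. Second, this bound becomes a bound on right multiplication in $\|\cdot\|_\varphi$ through an element $J\,\cdot\,J\in M'$. This is the same phenomenon behind the paper's remark that contractions in $M(\sigma^\varphi,[-a,a])$ are totally $e^{a/2}$-bounded. Your constant $C_F=\sup_{|\operatorname{Im}z|\le 1/2}\|h_z\|_1$ is not sharp, but only uniformity in $i$ matters, so nothing is lost. The route through \cite{AG2026_bicentralizer} that you mention would give the sharp constant at once.

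There is a sign slip to fix. The correct identity is $x\xi_\varphi=J\sigma^\varphi_{i/2}(x)^*J\xi_\varphi$, equivalently $x\xi_\varphi=J\sigma^\varphi_{-i/2}(x^*)J\xi_\varphi$. It follows from $x\xi_\varphi=J\Delta_\varphi^{1/2}x^*\xi_\varphi$ together with $\Delta_\varphi^{1/2}x^*\xi_\varphi=\sigma^\varphi_{-i/2}(x^*)\xi_\varphi$ and $\sigma^\varphi_{-i/2}(x^*)=\sigma^\varphi_{i/2}(x)^*$. You can also check it in $M_n$ with $\varphi=\mathrm{Tr}(\rho\,\cdot)$. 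There $JbJ$ is right multiplication by $b^*$, and $x\rho^{1/2}=\rho^{1/2}\bigl(\rho^{-1/2}x\rho^{1/2}\bigr)$ with $\rho^{-1/2}x\rho^{1/2}=\sigma^\varphi_{i/2}(x)$. The slip is harmless, because your $C_F$ already bounds $\|\sigma^\varphi_z(x)\|$ on the whole symmetric strip, so $\|\sigma^\varphi_{i/2}(x)\|\le C_F\|x\|$ as well.

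The step you flagged as needing care is actually easy, and no contour shift is required. The map $z\mapsto\sigma^\varphi_{h_z}(x)$ is $\sigma$-weakly entire. For real $s$ it equals $\sigma^\varphi_s(\sigma^\varphi_h(x))=\sigma^\varphi_s(x)$. That alone identifies it as the analytic extension of $t\mapsto\sigma^\varphi_t(x)$.
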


In the context of the previous fact, setting $(M,\varphi):=\prod_\u (M_i,\varphi_i)$, in the next section we will see that $x\in M(\sigma^\varphi,F)$.

The next lemma is a special case of \cite[Lemma 3.4]{AHHM20}:
\begin{lem}\label{lem isometry}
   Suppose that $(M,\varphi)$ is a W$^*$-probability space, where $M$ is a type III$_1$ factor.  Fix also a nonprincipal ultrafilter $\u$ on $\bb N$.  Then for every $b > 0$, there exists an isometry $v \in M^\mathcal{U}(\sigma^{\varphi^\mathcal{U}}, \{-b\})$.
\end{lem}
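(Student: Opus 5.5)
The plan is to reduce the existence of $v$ to an \emph{approximate} intertwining problem in $M$ and then solve it exactly in the ultrapower. By Fact \ref{fact spectralfacts}(5), $v\in M^\u(\sigma^{\varphi^\u},\{-b\})$ if and only if $\varphi^\u v=\lambda v\varphi^\u$, where $\lambda:=e^{-b}\in(0,1)$. If in addition $v^*v=1$, then $p:=vv^*$ is a projection with $\varphi^\u(p)=\lambda$. Moreover, $\operatorname{Ad}v$ carries $\varphi^\u$ to $\lambda^{-1}\varphi^\u(p\,\cdot\,p)$ on $pM^\u p$. So it suffices to find, for each $\varepsilon>0$, elements $v_\varepsilon\in M$ with three properties:
\begin{itemize}
\item $\|v_\varepsilon\|\le 1$;
\item $\|v_\varepsilon^*v_\varepsilon-1\|_\varphi^\#<\varepsilon$;
\item $\|\varphi v_\varepsilon-\lambda v_\varepsilon\varphi\|<\varepsilon$ in $M_*$.
\end{itemize}
In addition the $v_\varepsilon$ must lie in a fixed compact spectral subspace, so that the sequence $(v_{1/n})_n$ defines an element of $M^\u$ by Fact \ref{fact spectralultrafact}.

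To produce the approximate intertwiners, I would use the Connes--St\o rmer transitivity theorem for type III$_1$ factors: for faithful normal states $\psi_1,\psi_2$ on $M$ and $\varepsilon>0$ there is a unitary $u$ with $\|u\psi_1u^*-\psi_2\|<\varepsilon$. The steps are as follows.
\begin{itemize}
\item Choose a projection $p\in M$ with $\varphi(p)$ close to $\lambda$. This is possible since $M$ is diffuse; if exactness is needed, work in $M^\u$, whose centralizer of $\varphi^\u$ is diffuse, and re-index the ultrapower diagonally.
\item Since $M$ is properly infinite, choose an isometry $w$ with $ww^*=p$.
\item The functional $\psi:=\varphi(p)^{-1}\varphi(w\,\cdot\,w^*)$ is a faithful normal state on $M$. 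Transitivity gives unitaries $u_n$ with $\|u_n\varphi u_n^*-\psi\|\to 0$.
\item Set $v_n:=wu_n$. Then $v_n^*v_n=1$ exactly, and unwinding the definitions gives $\|\varphi v_n-\varphi(p)v_n\varphi\|\to 0$.
\end{itemize}

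The main obstacle is the passage to the ultrapower. An arbitrary bounded sequence of isometries need not define an element of the Ocneanu ultrapower, and approximate intertwining in $M_*$-norm must be upgraded to the exact relation $\varphi^\u v=\lambda v\varphi^\u$. I would handle this in four steps.
\begin{enumerate}
\item Replace $v_n$ by $\sigma^\varphi_{f}(v_n)$, where $f$ is the function from Corollary \ref{lem fourier} with $\hat f(-b)=1$, $\|f\|_1=1$ and $\operatorname{supp}\hat f\subset[-b-\delta_n,-b+\delta_n]$, with $\delta_n\to0$.
\item The approximate intertwining, expressed through (\ref{eq modular op}) as the statement that $v_n\xi_\varphi$ is nearly an eigenvector of $\log\Delta_\varphi$ with eigenvalue $-b$ (together with its counterpart for $v_n^*$), shows that this smoothing moves $v_n$ only slightly in $\|\cdot\|_\varphi^\#$. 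In particular $v_n^*v_n$ stays close to $1$.
\item The smoothed elements lie in $M(\sigma^\varphi,[-b-1,-b+1])$, so Fact \ref{fact spectralultrafact} yields $x=(\sigma^\varphi_f(v_n))_\u\in M^\u$.
\item Spectral considerations in the limit, using $\delta_n\to0$ and the fact that ultrapower spectral subspaces contain ultraproducts of spectral subspaces, give $x\in M^\u(\sigma^{\varphi^\u},\{-b\})$. Also $x^*x=1$, so $x$ is the required isometry.
\end{enumerate}
Controlling the smoothing in step 2 is where I expect the real work to lie. If direct estimates fail, I would fall back on the cited \cite[Lemma 3.4]{AHHM20}, of which this is a special case.
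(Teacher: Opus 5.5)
There is a genuine gap in your construction of the approximate intertwiners. The projection $p$ is only required to satisfy $\varphi(p)\approx\lambda$, and with that choice the claim that ``unwinding the definitions'' gives $\|\varphi v_n-\varphi(p)v_n\varphi\|\to 0$ is false.

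Put $\lambda:=\varphi(p)$. Testing $\|u_n\varphi u_n^*-\psi\|\to0$ on $z=u_nyw$ gives $\lambda\varphi(yv_n)=\varphi(v_nyp)+o(1)$, uniformly in $\|y\|\le1$. Hence
\[
\varphi(v_ny)-\lambda\varphi(yv_n)=\varphi\bigl(p\,v_ny\,(1-p)\bigr)+o(1).
\]
Since $\{v_ny:\|y\|\le1\}$ is exactly the unit ball of $pM$, this says $\|\varphi v_n-\lambda v_n\varphi\|\to\|(1-p)\varphi p\|$, where $((1-p)\varphi p)(z)=\varphi(pz(1-p))$. That limit vanishes only when $p\in M_\varphi$.

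This is not an artifact of your construction. If $v$ is an isometry with $\varphi v=\lambda v\varphi$, then $v^*\varphi=\lambda\varphi v^*$, so $\varphi vv^*=\lambda v\varphi v^*=vv^*\varphi$. Thus $vv^*$ must lie in the centralizer, and this necessary condition is missing from your list. For an ergodic $\varphi$ (generic, by the Marrakchi--Vaes result quoted in the final remark) there is no such $p\neq0,1$ at all. Your $v_n$ then have intertwining defect bounded away from $0$.

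Your parenthetical about the centralizer of $\varphi^{\mathcal U}$ points in the right direction, but for the wrong reason. What matters is not that $\varphi(p)=\lambda$ exactly, but that $p$ commutes, at least approximately, with $\varphi$. Repairing this inside $M$ needs projections with $\varphi(p)\approx\lambda$ and $\|p\varphi-\varphi p\|$ small. These exist in type III$_1$ factors but require their own argument, e.g.\ conjugate $e_{11}$ by a Connes--St\o rmer unitary that moves the state $\lambda\varphi\oplus(1-\lambda)\varphi$ on $M_2(M)\cong M$ close to $\varphi$. Even then, your step 2 is only asserted. It must turn an $M_*$-norm estimate into $L^2$-concentration of the spectral measure of $v_n\xi_\varphi$ near $-b$, tails included. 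Equation (\ref{eq modular op}) does not compare $M_*$-norms with $L^2$-norms, so an extra tool such as the Powers--St\o rmer inequality would be needed.

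The paper avoids all of this by working in $M^{\mathcal U}$ from the start. By \cite[Theorem 4.20 and Lemma 4.23]{AH14}, $M^{\mathcal U}$ is a type III$_1$ factor with strictly homogeneous state space, and $(M^{\mathcal U})_{\varphi^{\mathcal U}}$ is a II$_1$ factor. These are exactly the inputs for \cite[Lemma 3.4]{AHHM20}, which gives $v'$ with $v'(v')^*=1$ and $v'\varphi^{\mathcal U}=e^{-b}\varphi^{\mathcal U}v'$. Then $v=(v')^*$ works by Fact~\ref{fact spectralfacts}(1),(5).

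In terms of your own scheme, the argument runs as follows. Take a projection $p\in(M^{\mathcal U})_{\varphi^{\mathcal U}}$ with $\varphi^{\mathcal U}(p)=\lambda$, and an isometry $w$ with $ww^*=p$. By strict homogeneity, take a unitary $u$ with $u\varphi^{\mathcal U}u^*=\lambda^{-1}\varphi^{\mathcal U}(w\,\cdot\,w^*)$ exactly. Then $v=wu$ satisfies $\varphi^{\mathcal U}(v\,\cdot\,v^*)=\lambda\varphi^{\mathcal U}$ and $vv^*=p\in(M^{\mathcal U})_{\varphi^{\mathcal U}}$. Together these give $\varphi^{\mathcal U}(vy)=\varphi^{\mathcal U}(vyp)=\lambda\varphi^{\mathcal U}(yv)$ for all $y$, with no smoothing and no diagonal argument. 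So your fallback is in effect the paper's proof. It must be applied to $M^{\mathcal U}$, with the facts from \cite{AH14} supplying its hypotheses; these fail for $M$ itself in general (e.g.\ $M_\varphi$ may be trivial).
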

\begin{proof}
Consider $M^{\cU}$, which is a type III$_1$ factor with strictly homogeneous state space such that $(M^{\cU})_{\varphi^{\cU}}$ is a type II$_1$ factor \cite[Theorem 4.20 and Lemma 4.23]{AH14}. By \cite[Lemma 3.4]{AHHM20} with $\lambda=e^{-b}\le 1$, there exists $v'\in M^{\cU}$ such that $v'(v')^*=1$ and $v'\varphi^{\cU}=e^{-b}\varphi^{\cU}v'$. Therefore, by Fact \ref{fact spectralfacts} (1) and (5), $v:=(v')^*$ is an isometry belonging to $M^{\cU}(\sigma^{\varphi^{\cU}},\{-b\})$. 
\end{proof}

\subsection{Simple topological results}

\begin{lem}\label{lem boundary seq}
Suppose \(F\subseteq \mathbb R\) is closed and \(a\in \partial F\). Then at least one of the following holds:
\begin{enumerate}
\item[{\rm (1)}] there exists a strictly increasing sequence \((a_n)_{n\in \bb N}\) such that
\begin{itemize}
    \item $a_n<a$ for all $n\in \bb N$,
    \item $a_n\notin F$ for all $n\in \bb N$, and 
    \item $\lim a_n=a$
\end{itemize}
\item[{\rm (2)}] there exists a strictly decreasing sequence \((a_n)_{n\in \bb N}\) such that
\begin{itemize}
    \item $a_n>a$ for all $n\in \bb N$,
    \item $a_n\notin F$ for all $n\in \bb N$, and 
    \item $\lim a_n=a$
\end{itemize}
\end{enumerate}
\end{lem}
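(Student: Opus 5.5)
Since $a\in\partial F$ and $F$ is closed, $a\in F$. Because $\partial F = F\setminus \operatorname{int}F$, the point $a$ is not interior to $F$. Hence every neighborhood of $a$ meets the open set $\mathbb R\setminus F$. The plan is to use this to produce points outside $F$ accumulating at $a$ from at least one side, and then thin them out to a strictly monotone sequence.

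First, for each $n\in\mathbb N$ choose $b_n\in (a-1/n,a+1/n)\setminus F$. This is possible since $a\notin\operatorname{int}F$. Note that $b_n\neq a$ because $a\in F$. So each $b_n$ lies either in $(-\infty,a)$ or in $(a,\infty)$. One of these two alternatives occurs for infinitely many $n$. Passing to that subsequence, we obtain either infinitely many $b_n<a$ with $b_n\to a$, or infinitely many $b_n>a$ with $b_n\to a$.

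Next, treat the case of points to the left (the other is symmetric, by reflecting $x\mapsto -x$). We have a sequence $(c_k)$ with $c_k<a$, $c_k\notin F$, and $c_k\to a$. Extract a strictly increasing subsequence inductively. Set $a_1=c_1$. Given $a_n=c_{k_n}<a$, the convergence $c_k\to a$ together with $a-a_n>0$ yields some $k_{n+1}>k_n$ with $c_{k_{n+1}}>a_n$; set $a_{n+1}=c_{k_{n+1}}$. Then $(a_n)$ is strictly increasing, lies below $a$, avoids $F$, and converges to $a$ as a subsequence of $(c_k)$. This gives alternative (1). The right-sided case gives (2).

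There is no real obstacle here. The only point needing care is that $a\in F$, which rules out $b_n=a$ and so guarantees each chosen point lies strictly on one side. This uses the closedness of $F$.
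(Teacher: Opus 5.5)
Your proof is correct and uses essentially the same ingredients as the paper's: $a\in F$ (from closedness) together with $a\notin\operatorname{int}F$, and the same inductive extraction of a strictly monotone subsequence. The only difference is packaging. The paper argues by contradiction, showing that if (1) and (2) both fail then $(a-\varepsilon,a)\cup\{a\}\cup(a,a+\delta)\subseteq F$. You argue directly, by a pigeonhole on which side of $a$ the points $b_n\notin F$ fall.
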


\begin{proof}
Assume, towards a contradiction, that neither \((1)\) nor \((2)\) holds.  Since (1) fails, we claim that then there exists \(\varepsilon>0\) such that
\[
(a-\varepsilon,a)\subseteq F.
\]
Indeed, if no such \(\varepsilon\) existed, then for every \(n\in \mathbb N\) we could choose a point
\[
x_n\in (a-1/n,a)\setminus F.
\]
From these points we construct a strictly increasing subsequence converging to \(a\). Choose \(n_1\in \mathbb N\) arbitrarily. Having chosen \(n_k\), choose \(n_{k+1}>n_k\) large enough so that
\[
a-\tfrac1{n_{k+1}} > x_{n_k}.
\]
Then pick
\[
x_{n_{k+1}}\in (a-1/n_{k+1},a)\setminus F.
\]
Because \(x_{n_{k+1}}>a-\tfrac1{n_{k+1}}>x_{n_k}\), the sequence \((x_{n_k})\) is strictly increasing, each term lies outside \(F\), and \(x_{n_k}\to a\) since
\[
a-\tfrac1{n_k}<x_{n_k}<a.
\]
This contradicts the assumption that \((1)\) fails. Hence the claim is proved.

By the same argument, if \((2)\) fails, then there exists \(\delta>0\) such that
\[
(a,a+\delta)\subseteq F.
\]

Therefore, if both \((1)\) and \((2)\) fail, there exist \(\varepsilon,\delta>0\) such that
\[
(a-\varepsilon,a)\subseteq F
\qquad\text{and}\qquad
(a,a+\delta)\subseteq F.
\]
Since also \(a\in \partial F\subseteq \overline F=F\), this implies
\[
(a-\varepsilon,a+\delta)\subseteq F,
\]
so \(a\) is an interior point of \(F\). However, this contradicts \(a\in \partial F\).

Thus at least one of \((1)\) or \((2)\) must hold.
\end{proof}

\begin{lem}\label{lem partial F contains nonzero}
Let \(F\subseteq \mathbb R\) be closed. Then the following are equivalent:
\begin{enumerate}
\item[{\rm (1)}] \(\partial F=\{0\}\).
\item[{\rm (2)}] \(F\in \bigl\{\{0\},\ [0,\infty),\ (-\infty,0]\bigr\}\).
\end{enumerate}
\end{lem}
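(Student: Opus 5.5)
The direction (2)$\Rightarrow$(1) is a direct check. The boundary of $\{0\}$ is $\{0\}$, since the set is closed with empty interior. For $[0,\infty)$, the interior is $(0,\infty)$ and the closure is the set itself, so the boundary is $\{0\}$. The case $(-\infty,0]$ is symmetric.

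For (1)$\Rightarrow$(2), assume $\partial F=\{0\}$. Since $F$ is closed, $\partial F\subseteq F$, so $0\in F$. The key observation concerns the two open half-lines $U_+=(0,\infty)$ and $U_-=(-\infty,0)$. Neither meets $\partial F$, so each is the disjoint union of the two relatively open sets
\[
U_\pm\cap \operatorname{int}(F)\quad\text{and}\quad U_\pm\setminus F=U_\pm\cap(\mathbb R\setminus F).
\]
Here we use $\mathbb R=\operatorname{int}(F)\sqcup\partial F\sqcup(\mathbb R\setminus \overline F)$ together with $\overline F=F$. Each $U_\pm$ is connected, so either $U_\pm\subseteq F$ or $U_\pm\cap F=\emptyset$.

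This gives four combinations, each with $F=\{0\}\cup(\text{the half-lines contained in }F)$:
\begin{itemize}
\item neither half-line in $F$: $F=\{0\}$;
\item only $U_+\subseteq F$: $F=[0,\infty)$;
\item only $U_-\subseteq F$: $F=(-\infty,0]$;
\item both half-lines in $F$: $F=\mathbb R$, whose boundary is empty, contradicting $\partial F=\{0\}$.
\end{itemize}
This leaves exactly the three sets in (2).

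The only step needing care is the decomposition of each half-line into interior and exterior points. If one prefers to avoid connectedness, Lemma \ref{lem boundary seq}-style reasoning works instead. Suppose $x>0$ with $x\in F$, and $y>0$ with $y\notin F$, say $x<y$. Then $\sup\{t\in[x,y]: [x,t]\subseteq F\}$ is a boundary point of $F$ lying in $(0,\infty)$, which contradicts (1). The case $y<x$ is analogous, using an infimum. I expect no real obstacle beyond this.
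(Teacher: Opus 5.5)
Your proof is correct and follows essentially the same route as the paper. Both note that $0\in F$ and that, since $\partial F=\{0\}$ and $F$ is closed, $F\cap(0,\infty)$ and $F\cap(-\infty,0)$ are clopen in the connected half-lines; this leaves four cases, of which $\mathbb R$ is excluded because its boundary is empty. Your supplementary supremum argument just unpacks the connectedness of the half-line and is not needed.
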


\begin{proof}
\((2)\Rightarrow (1)\) is immediate. 
We prove \((1)\Rightarrow (2)\). Assume that $\partial F=\{0\}$. 
Since \(F\) is closed, we have \(\partial F\subseteq F\) and hence $0\in F$. 

Now let \(x\in F\setminus\{0\}\). Since \(x\notin \partial F\), we must have \(x\in \operatorname{int}(F)\). Thus every nonzero point of \(F\) is an interior point.

Therefore both
\[
F\cap (0,\infty)
\quad\text{and}\quad
F\cap (-\infty,0)
\]
are open and closed in the connected spaces \((0,\infty)\) and \((-\infty,0)\), respectively. Hence each of them is either empty or the whole interval. So each of the two possibilities
\[
F\cap (0,\infty)\in\{\emptyset,(0,\infty)\},
\qquad
F\cap (-\infty,0)\in\{\emptyset,(-\infty,0)\}
\]
must occur.

Since \(0\in F\), it follows that \(F\) is one of
\[
\{0\},\qquad [0,\infty),\qquad (-\infty,0],\qquad \mathbb R.
\]
But \(\partial F=\{0\}\) excludes \(F=\mathbb R\). Therefore $
F\in \bigl\{\{0\},\ [0,\infty),\ (-\infty,0]\bigr\}.$
\end{proof}

% \subsection{An analysis lemma}

% \todo[inline]{(HA) I added the calculation of F\'ejer kernel in ultraproduct check.tex, Prop 4.8. It is indeed hard to directly compute the Fourier transform of $F_a$. An easier route is to first compute the inverse Fourier transform of $\widehat{F}_a$ and check that it coincides with $F_a$. This kernel is explained in Katznelson's book. We can then consider the translated version $\psi_{b,a}$ as in Lemma 5.4 of BCnotes.tex to get \(\widehat{\psi_{b,a}}(a)=1\). If the interval is open, we can choose small \(b>0\) so that the Fourier transform \(\widehat{\psi}_{b,a}\) is supported in $[a-b,a+b]\subset I$.
% (IG) Great, thanks!  Can you insert it here?\\
% (HA) I moved this to Corollary \ref{lem fourier} in preliminary section and added the details.} 
% \begin{proof}
% \todo{Can you add the proof in here including details?  I wasn't able to verify that the integral was $1$.  Also, were you off by a minus sign somewhere?}
% \end{proof}

\section{Spectral subspaces are zerosets}

\begin{prop}\label{containment}
Suppose that $(M_i,\varphi_i)_{i\in I}$ is a family of W$^*$-probability spaces and $\u$ is an ultrafilter on $I$.  Fix also closed $F\subseteq \bb R$.  Set $(M,\varphi):=\prod_\u (M_i,\varphi_i)$.  Then $\prod_\u M_i(\sigma^{\varphi_i},F)\subseteq M(\sigma^\varphi,F)$.
\end{prop}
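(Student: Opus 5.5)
Fix $x=(x_i)_\u\in \prod_\u M_i(\sigma^{\varphi_i},F)$, where $x_i\in M_i(\sigma^{\varphi_i},F)$ and $\sup_i\|x_i\|<\infty$, the sequence representing an element of the Ocneanu ultraproduct. By Fact \ref{fact spectralfacts}(2), it suffices to show that $\sigma^\varphi_g(x)=0$ for every $g\in L^1(\mathbb R)$ with $\operatorname{supp}(\hat g)\cap F=\emptyset$. The same criterion gives $\sigma^{\varphi_i}_g(x_i)=0$ for every $i$. The argument therefore reduces to one compatibility statement:
\[
\sigma^\varphi_g\bigl((x_i)_\u\bigr)=\bigl(\sigma^{\varphi_i}_g(x_i)\bigr)_\u
\]
for all $g\in L^1(\mathbb R)$ and all $(x_i)_\u\in M$. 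Granting this, $\sigma^\varphi_g(x)=(0)_\u=0$, and we are done.

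To prove the compatibility I will use the known description of the modular group of the ultraproduct state from \cite{AH14}: $\sigma^\varphi_t((x_i)_\u)=(\sigma^{\varphi_i}_t(x_i))_\u$ for each $t\in\mathbb R$. If invoking it directly is undesirable, I would avoid the operator-level identity and test in $L^2$, since $\varphi$ is faithful and $\xi_\varphi$ is separating. First, $(\sigma^{\varphi_i}_g(x_i))_i$ is norm bounded by $\|g\|_1\sup_i\|x_i\|$. It also lies in the Ocneanu normalizer: if $(y_i)$ is trivial, i.e.\ $y_i\to 0$ $*$-strongly along $\u$, then
\[
\|\sigma^{\varphi_i}_g(y_i)\xi_{\varphi_i}\|\le\|g\|_1\|y_i\xi_{\varphi_i}\|,
\]
and the same holds for adjoints. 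Hence $\sigma_g$ preserves trivial sequences, and therefore the normalizer. So $y:=(\sigma^{\varphi_i}_g(x_i))_\u$ is a well-defined element of $M$.

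It remains to identify $y$ with $\sigma^\varphi_g(x)$. I will compare the functionals $\varphi(a^*\,\cdot\,)$ at $y$ and at $\sigma^\varphi_g(x)$ for $a=(a_i)_\u\in M$. The equality
\[
\varphi\bigl(a^*\sigma^\varphi_g(x)\bigr)=\int g(t)\,\varphi\bigl(a^*\sigma^\varphi_t(x)\bigr)\,dt
\]
is the definition of the $\sigma$-weak integral. The pointwise identity $\sigma^\varphi_t(x)=(\sigma^{\varphi_i}_t(x_i))_\u$ then turns the integrand into
\[
\lim_\u\varphi_i\bigl(a_i^*\sigma^{\varphi_i}_t(x_i)\bigr).
\]
Dominated convergence lets us exchange the ultralimit with the integral: the integrands are bounded by $|g(t)|\sup\|a_i\|\sup\|x_i\|$, and an ultralimit of uniformly bounded continuous functions can be handled by approximating $g$ in $L^1$ by compactly supported functions and using equicontinuity on compact sets. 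The result is $\lim_\u\varphi_i(a_i^*\sigma_g^{\varphi_i}(x_i))=\varphi(a^*y)$. Because $M\xi_\varphi$ is dense, $y=\sigma^\varphi_g(x)$.

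The main obstacle is the interchange of $\lim_\u$ with $\int\cdot\,dt$. Equicontinuity in $t$ of $t\mapsto\varphi_i(a_i^*\sigma^{\varphi_i}_t(x_i))$ uniformly in $i$ is not automatic. I would first get around this by approximating $g$ in $L^1$ with finite Riemann-type sums, which only needs uniform boundedness plus the estimate $\|\sigma_g(x)\|\le\|g\|_1\|x\|$. If that fails, I would instead cite \cite[Theorem 4.1]{AH14}, which states that $\sigma^{\varphi}_t$ acts coordinatewise, and use $\sigma$-weak continuity of the integral.
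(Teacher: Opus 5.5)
Your reduction matches the paper's. Applying Fact~\ref{fact spectralfacts}(2) on both sides leaves only the compatibility $\sigma^\varphi_g((x_i)_\u)=(\sigma^{\varphi_i}_g(x_i))_\u$. The paper takes this directly from \cite[Lemma 4.14]{AH14}. The gap is in your derivation of it from the pointwise formula $\sigma^\varphi_t((x_i)_\u)=(\sigma^{\varphi_i}_t(x_i))_\u$. The obstacle you flagged, exchanging $\lim_\u$ with $\int\cdot\,dt$, is real. Uniform bounds, pointwise ultralimits and continuity of the limit do not justify it.

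Here is a counterexample to that inference. By Dirichlet's simultaneous approximation theorem, the sets $\{n: |e^{int_j}-1|<\varepsilon,\ j\le k\}$ are infinite. Hence there is a nonprincipal ultrafilter $\u$ on $\bb N$ with $\lim_{n\to\u}e^{int}=1$ for every $t\in\bb R$. The functions $h_n(t)=e^{int}$ are then uniformly bounded and continuous, with continuous pointwise ultralimit $1$. Yet $\int g(t)h_n(t)\,dt=\hat g(n)\to 0$ by Riemann--Lebesgue, while $\int g$ can equal $1$.

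Your Riemann-sum workaround does not escape this. The bound $\|\sigma_g(x)\|\le\|g\|_1\|x\|$ controls only $L^1$-perturbations of $g$. Replacing $\int g(t)\sigma^{\varphi_i}_t(x_i)\,dt$ by $\sum_k g(t_k)|J_k|\,\sigma^{\varphi_i}_{t_k}(x_i)$ with error uniform in $i$ needs exactly the equicontinuity in $t$ that you noted is not automatic; in the example, no fixed mesh works uniformly in $n$. Your fallback, Theorem 4.1 plus $\sigma$-weak continuity of the integral, uses only the same pointwise information, so it does not close the gap either.

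What is missing is the uniformity that membership in the Ocneanu normalizer provides. For $(x_i)_i\in\mathcal{M}^{\u}$, the modular orbits $t\mapsto\sigma^{\varphi_i}_t(x_i)$ are equicontinuous along $\u$. This is a nontrivial result of \cite{AH14}. It is what rules out, for example, nontrivial bounded sequences of eigenoperators with $\sigma^{\varphi_n}_t(x_n)=e^{int}x_n$, the operator analogue of the example above. With equicontinuity, your plan of compactly supported approximation followed by $L^1$-density does go through.

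There is a second slip of the same kind. The step ``$\sigma_g$ preserves trivial sequences, hence the normalizer'' is a \emph{non sequitur}, because $\sigma_g$ is not multiplicative. To show that $(\sigma^{\varphi_i}_g(x_i)y_i)_i$ is trivial for trivial $(y_i)_i$, you must control $\int g(t)\,\sigma^{\varphi_i}_t\bigl(x_i\sigma^{\varphi_i}_{-t}(y_i)\bigr)\,dt$ uniformly in $t$, which is again an extra uniformity argument.

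Both points are settled at once by citing \cite[Lemma 4.14]{AH14}. That lemma asserts that $(\sigma^{\varphi_i}_g(x_i))_i$ lies in the normalizer and represents $\sigma^\varphi_g(x)$. With that citation, your argument is exactly the paper's proof.
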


\begin{proof}
Take $x \in \prod_\cU M_i(\sigma^{\varphi_i}, F)$ and take a uniformly bounded sequence $(x_i)_{i \in I}\in \M^{\cU}(M_i,\varphi_i)$ such that $x=(x_i)_\u$ and $x_i \in M_i(\sigma^{\varphi_i}, F)$ for all $i\in I$. 
Let $f\in L^1(\mathbb{R})$ be such that $\operatorname{supp}(\hat{f})\cap F=\emptyset$. 
Then $\sigma_f^{\varphi_i}(x_i)=0$ for all $i\in I$ by Fact \ref{fact spectralfacts}(2), whence by \cite[Lemma 4.14]{AH14}, 
\[\sigma_f^{\varphi}(x)=(\sigma_f^{\varphi_i}(x_i))_{\cU}=0.\]
Then by Fact \ref{fact spectralfacts}(2) again, we obtain $x\in M(\sigma^\varphi,F)$.
\end{proof}

We refer the reader to \cite[Appendix B]{AG2026_bicentralizer} for background information on zerosets.

\begin{cor}
Let $T_{W^*}$ denote the theory of W$^*$-probability spaces.  Then for any closed subset $F\subseteq \bb R$, the $T_{W^*}$-functor that maps $(M,\varphi)$ to the (set of totally $1$-bounded elements of the) spectral subspace $M(\sigma^\varphi,F)$ is a $T_{W^*}$-zeroset. 
\end{cor}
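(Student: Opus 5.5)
Proposition \ref{containment} shows that the spectral subspace can be approximated from the inside by ultraproducts. So the task is to verify the model-theoretic definition of a $T_{W^*}$-zeroset from \cite[Appendix B]{AG2026_bicentralizer}. I take that definition to be the following. A $T_{W^*}$-functor $X$, assigning to each model $(M,\varphi)$ a closed subset $X(M,\varphi)$ of the sort of totally $1$-bounded elements, is a zeroset when it is closed under ultraproducts in the weak sense
\[
\prod_\u X(M_i,\varphi_i)\subseteq X\Bigl(\prod_\u (M_i,\varphi_i)\Bigr),
\]
together with preservation under isomorphism. This is equivalent to $X$ being the zeroset of some (possibly infinitary) family of formulas, since it makes $X$ \emph{type-definable}. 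The proof therefore reduces to checking this containment and the trivial structural properties.

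First I check that $X(M,\varphi):=M(\sigma^\varphi,F)\cap B_1$ is a legitimate functor into the sort, where $B_1$ is the set of totally $1$-bounded elements. It is invariant under isomorphisms of W$^*$-probability spaces, since such an isomorphism intertwines the modular groups. It is also closed in the $\|\cdot\|_\varphi^\#$ metric on $B_1$. To see this, use the characterization $x\in M(\sigma^\varphi,F)\iff 1_F(\log\Delta_\varphi)x\xi_\varphi=x\xi_\varphi$. The map $x\mapsto x\xi_\varphi$ is continuous from $(B_1,\|\cdot\|^\#_\varphi)$ to $L^2(M,\varphi)$, and the fixed space of a projection is closed.

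Next I prove the ultraproduct containment. Take $x_i\in X(M_i,\varphi_i)$. By the main result of \cite{AGHS25}, the sort of totally $1$-bounded elements commutes with ultraproducts, so $x=(x_i)_\u$ is a totally $1$-bounded element of $(M,\varphi)=\prod_\u(M_i,\varphi_i)$. Proposition \ref{containment} gives $x\in M(\sigma^\varphi,F)$, hence $x\in X(M,\varphi)$.

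The main obstacle is that Proposition \ref{containment} was stated for sequences that are already known to define elements of the Ocneanu ultraproduct. I must make sure that the representing sequence of totally $1$-bounded elements is such a sequence, i.e.\ lies in $\M^\u$. This is exactly the content of the \cite{AGHS25} result just cited, so no extra work is needed there. Finally, I invoke the characterization in \cite[Appendix B]{AG2026_bicentralizer} that a closed, isomorphism-invariant functor closed under ultraproducts is a $T_{W^*}$-zeroset. This can be seen via Łoś's theorem, by writing the distance-type condition as an infimum of conditions $\|\sigma^\varphi_g(x)\|^\#_\varphi=0$ over suitable $g$. That completes the plan.
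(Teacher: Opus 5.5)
Your proposal is correct and follows the paper's route: the corollary is Proposition \ref{containment} combined with the ultraproduct criterion for zerosets in \cite[Appendix B]{AG2026_bicentralizer}, and your extra checks are the routine details the paper leaves implicit (closedness of $M(\sigma^\varphi,F)$ in the sort via $1_F(\log\Delta_\varphi)$, and using \cite{AGHS25} to know that a sequence of totally $1$-bounded elements defines an element of the Ocneanu ultraproduct). Two minor remarks: the standard form of that criterion also asks for closure under ultraroots, since weak ultraproduct closure plus isomorphism invariance alone do not give type-definability, but this is immediate here from $\sigma_g^{\varphi^{\cU}}(\iota(a))=\iota(\sigma_g^{\varphi}(a))$ for the diagonal embedding $\iota$ together with Fact \ref{fact spectralfacts}(2); and in your closing remark the condition should be a supremum (an intersection of zerosets) over $g$ with $\operatorname{supp}(\hat g)\cap F=\emptyset$, not an infimum.
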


\section{The main theorem}

\begin{thm}\label{prop Arvesonisundefinable}
    Let $(M,\varphi)$ be a W$^*$-probability space, where $M$ is a type III$_1$ factor. Let $\u$ be a nonprincipal ultrafilter on $\mathbb{N}$ and let $F$ be a closed, nonempty, proper subset of $\mathbb{R}$. Then $(M(\sigma^{\varphi},F))^{\cU}$ is a \emph{proper} subset of $M^{\cU}(\sigma^{\varphi^{\cU}},F)$. Consequently, the spectral subspace $M(\sigma^{\varphi},F)$ is a definable subset of $(M,\varphi)$ if and only if $F=\emptyset$ or $\bb R$. 
\end{thm}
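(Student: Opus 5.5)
The plan is to find a boundary point $a\in\partial F$ and build, by a diagonal argument, an element $x\in M^{\cU}(\sigma^{\varphi^{\cU}},F)$ that is represented by a sequence whose $n$-th term has Arveson spectrum in a small interval disjoint from $F$. Any representative of $x$ taken inside $M(\sigma^\varphi,F)$ would then be almost orthogonal to it in $L^2(M,\varphi)$, which is impossible. The ``consequently'' part then follows at once. Definability would force $M^{\cU}(\sigma^{\varphi^{\cU}},F)=M(\sigma^\varphi,F)^{\cU}$, while for $F=\emptyset,\mathbb R$ both sides are trivially equal.

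\emph{Step 1 (choice of points).} Since $F$ is nonempty, closed and proper, and $\mathbb R$ is connected, we have $\partial F\neq\emptyset$. Pick $a\in\partial F$. By Lemma \ref{lem boundary seq} there is a strictly monotone sequence $a_n\notin F$ with $a_n\to a$. Discarding at most one term, I may assume $a_n\neq 0$ for all $n$. Since $F$ is closed and $a_n\notin F$, I can choose closed intervals $J_n$ containing $a_n$ in their interior, with $J_n\cap F=\emptyset$, $J_n\subseteq[a-1,a+1]$, and $\operatorname{diam}J_n\to0$. For each $n$, Corollary \ref{lem fourier} gives $f_n\in L^1(\mathbb R)$ with $\|f_n\|_1=1$, $\hat f_n(a_n)=1$ and $\operatorname{supp}\hat f_n\subseteq J_n$.

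\emph{Step 2 (eigenoperators and diagonalization).} This is the main technical step. If $a_n<0$, Lemma \ref{lem isometry} with $b=-a_n$ gives an isometry $w_n\in M^{\cU}(\sigma^{\varphi^{\cU}},\{a_n\})$. If $a_n>0$, I take the adjoint of such an isometry for $b=a_n$, using Fact \ref{fact spectralfacts}(1). In either case $\|w_n\|\le1$. Moreover $\|w_n\|_{\varphi^{\cU}}\ge c>0$ uniformly in $n$: by Fact \ref{fact spectralfacts}(5), $\varphi^{\cU}(w_nw_n^*)$ and $\varphi^{\cU}(w_n^*w_n)$ differ by the factor $e^{\pm a_n}$, and the $a_n$ are bounded.

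Write $w_n=(w_{n,k})_{\cU}$ with $\|w_{n,k}\|\le1$. By Fact \ref{fact spectralfacts}(4), $\sigma^{\varphi^{\cU}}_{f_n}(w_n)=w_n$. By \cite[Lemma 4.14]{AH14},
\[(\sigma^{\varphi}_{f_n}(w_{n,k}))_k\]
represents $w_n$. Hence for $\cU$-many $k$ we have $\|\sigma^\varphi_{f_n}(w_{n,k})\|_\varphi\ge c/2$. I fix one such $k(n)$ and set $z_n:=\sigma^\varphi_{f_n}(w_{n,k(n)})$. Then Fact \ref{fact spectralfacts}(3) gives $z_n\in M(\sigma^\varphi,J_n)$, and we have $\|z_n\|\le1$ and $\|z_n\|_\varphi\ge c/2$.

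\emph{Step 3 (the element $x$ lies in the spectral subspace of $M^{\cU}$).} All $J_n$ lie in the compact set $[a-1,a+1]$, so Fact \ref{fact spectralultrafact} shows that $x:=(z_n)_{\cU}$ is a well-defined element of $M^{\cU}$. Its norm satisfies $\|x\|_{\varphi^{\cU}}=\lim_{\cU}\|z_n\|_\varphi\ge c/2$, so $x\neq0$.

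For each $m$, let $K_m$ be the closure of $\bigcup_{n\ge m}J_n$. Because $\cU$ is nonprincipal, the sequence $(z_n)$ agrees, on a set in $\cU$, with a sequence lying in $M(\sigma^\varphi,K_m)$. Proposition \ref{containment} then gives $x\in M^{\cU}(\sigma^{\varphi^{\cU}},K_m)$ for every $m$. Since $\bigcap_m K_m=\{a\}$ and $a\in F$, we get $x\in M^{\cU}(\sigma^{\varphi^{\cU}},F)$.

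\emph{Step 4 (the element $x$ is not in the ultrapower of the spectral subspace).} Suppose $x=(y_n)_{\cU}$ with $y_n\in M(\sigma^\varphi,F)$ and the $y_n$ uniformly bounded. Then
\[y_n\xi_\varphi\in\operatorname{ran}1_F(\log\Delta_\varphi),\qquad z_n\xi_\varphi\in\operatorname{ran}1_{J_n}(\log\Delta_\varphi).\]
Since $J_n\cap F=\emptyset$, these two vectors are orthogonal. Therefore
\[\|z_n-y_n\|_\varphi\ge\|z_n\|_\varphi\ge c/2.\]
This contradicts $\lim_{\cU}\|z_n-y_n\|_\varphi=0$, which must hold because $(z_n)$ and $(y_n)$ represent the same element of $M^{\cU}$.

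The main obstacle is Step 2, and specifically producing genuine elements of $M$ with spectrum near $a_n$ and $\varphi$-norm bounded below. This is exactly where the type III$_1$ hypothesis enters, through Lemma \ref{lem isometry}. It is also where one must verify that the smoothed representatives $\sigma_{f_n}$ still represent $w_n$ and that the resulting diagonal sequence is admissible in the Ocneanu ultrapower.
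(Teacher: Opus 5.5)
Your proof is correct and follows the paper's argument: eigen-isometries in $M^{\cU}$ from Lemma \ref{lem isometry}, smoothing their representatives by $\sigma_{f_n}^{\varphi}$, a diagonal choice, Proposition \ref{containment} to pin the spectrum of the diagonal element to the boundary point, and the $L^2$-orthogonality contradiction. The only difference is in the case handling. You treat all boundary points at once, using co-isometries when $a_n>0$ and the uniform bound $\|w_n\|_{\varphi^{\cU}}^2\ge e^{-\sup_n|a_n|}$. This avoids the paper's reduction of positive boundary points to $-F$ via adjoints, and also its separate treatment of $\partial F=\{0\}$ through Lemma \ref{lem partial F contains nonzero}.
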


\begin{proof}
Since $\bb R$ is connected and $F$ is closed, we have that $\partial F\neq \emptyset$. We first consider the case that $-a\in \partial F$ for some $a>0$. 
By Lemma \ref{lem boundary seq}, there exists a sequence $(a_n)_{n\in \mathbb{N}}$ of positive numbers such that $|a_n-a|<\frac{1}{n}$, $-a_n\notin F$ for each $n$, and either (i) $-a_1<-a_2<\dots<-a$ or (ii) $-a_1>-a_2>\dots>-a$ holds.  In either case, since $F$ is closed, for each $n$ we may find a closed interval $I_n$ of length less than $\frac{1}{n}$ such that $-a_n\in \operatorname{int}(I_n)$ and $I_n\cap F=\emptyset$. 

By Lemma \ref{lem isometry}, for each $n$, we may consider an isometry $v_n \in M^\mathcal{U}(\sigma^{\varphi^\mathcal{U}}, \{-a_n\})$. For each $n$, let $(\tilde{v}_{n,k})_{k\in \mathbb{N}}$ be a sequence such that $\|\tilde{v}_{n,k}\|\le 1$ for every $k,n$ and $v_n=(\tilde{v}_{n,k})_\u$. By Corollary \ref{lem fourier}, we may take $f_n\in L^1(\mathbb{R})$ such that: 
\begin{itemize}
    \item $\int_{\mathbb{R}}|f_n(t)|dt=1$
    \item $\hat{f_n}(-a_n)=1$
    \item $\operatorname{supp}(\hat{f_n})\subseteq I_n$
\end{itemize}

Set $v_{n,k}:=\sigma_{f_n}^{\varphi}(\tilde{v}_{n,k})$. Then $\|v_{n,k}\|\le \|f_n\|_1\|\tilde{v}_{n,k}\|\le 1$, and by Fact \ref{fact spectralfacts}(3), we have $v_{n,k}\in M(\sigma^{\varphi},I_n)$ for every $k,n$.  Moreover, since $\hat{f_n}(-a_n)=1$, by Fact \ref{fact spectralfacts}(4), we have 
\[v_{n}=\sigma_{f_n}^{\varphi^{\cU}}(v_{n})=(\sigma_{f_n}^{\varphi}(\tilde{v}_{n,k}))_{\cU}=(v_{n,k})_{\cU}.\]
Since $v_{n}$ is an isometry, we have 
$\displaystyle \lim_{k\to \cU}\|v_{n,k}\|_{\varphi}=1$. Therefore, for each $n$, we may find $k_n\in \mathbb{N}$ such that $w_n:=v_{n,k_n}$ satisfies:
\begin{itemize}
    \item $\|w_n\|\leq 1$
    \item $\|w_n\|_\varphi\geq 1-1/n$
    \item $w_n\in M(\sigma^\varphi,I_n)$.
\end{itemize}

Note that in case (i) we have  $I_n\subseteq [-a-\frac{2}{n},-a]$ for all $n\in \bb N$ and in case (ii) we have $I_n\subseteq [-a,-a+\frac{2}{n}]$ for every $n\in \mathbb{N}$. Thus, in either case, by Fact \ref{fact spectralultrafact}, we have that $w:=(w_n)_{\cU}$ is a well-defined element of $M^{\cU}$ such that $\|w\|_{\varphi^{\cU}}=1$. 
In case (i), we have $\operatorname{Sp}_{\sigma^{\varphi}}(w_m)\subseteq [-a-\frac{2}{n},-a]$ for every $m\ge n$, whence by redefining $w_m=0\,(m<n)$, we obtain  $\operatorname{Sp}_{\sigma^{\varphi^{\cU}}}(w)\subseteq [-a-\frac{2}{n},-a]$ by Proposition \ref{containment}. Since $n$ is arbitrary, we see that $w\in M^{\cU}(\sigma^{\varphi^{\cU}},\{-a\})\subseteq M^{\cU}(\sigma^{\varphi^{\cU}},F)$. By a similar argument, we also have $w\in M^{\cU}(\sigma^{\varphi^{\cU}},F)$ in case (ii) as well. 

We now show that $w\notin (M(\sigma^{\varphi},F))^{\cU}$. Assume, towards a contradiction, that  $w\in (M(\sigma^{\varphi},F))^{\cU}$. 
Then there is a bounded sequence $(z_n)_{n\in \mathbb{N}}$ such that $w=(z_n)_\u$ and such that $z_n\in M(\sigma^{\varphi},F)$ for every $n\in \mathbb{N}$. Then for every $n\in \mathbb{N}$, we have that
$$w_n \xi_\varphi \in 1_{I_n}(\log \Delta_{\varphi}) L^2(M, \varphi)$$
$$z_n \xi_\varphi \in 1_{F}(\log \Delta_{\varphi}) L^2(M, \varphi).$$
In particular, $z_n\xi_{\varphi}$ and $w_n\xi_{\varphi}$ belong to the spectral subspaces for $\log \Delta_{\varphi}$ corresponding to disjoint closed subsets. Thus, they are orthogonal in $L^2(M,\varphi)$. Since $w=(z_n)_{\cU}=(w_n)_{\cU}$, we have, by the Pythagorean law, that
\begin{align*}
0&=\lim_{n\to \cU}\|z_n\xi_{\varphi}-w_n\xi_{\varphi}\|^2\\
&=\lim_{n\to \cU}(\|z_n\xi_{\varphi}\|^2+\|w_n\xi_{\varphi}\|^2)\\
&\ge \lim_{n\to \cU}\|w_n\xi_{\varphi}\|^2=1,
\end{align*}
which is a contradiction. This concludes the proof for the case that $\partial F$ contains a negative real. 

If $a\in \partial F$ for some $a>0$, then $-a\in -\partial F=\partial (-F)$, and by the previous case, there is $w\in M^{\cU}(\sigma^{\varphi^{\cU}},-F)\setminus M(\sigma^{\varphi},-F)^{\cU}$. Then, by Fact \ref{fact spectralfacts}(1), we have that $v:=w^*\in M^{\cU}(\sigma^{\varphi^{\cU}},F)$. If $v\in M(\sigma^{\varphi},F)^{\cU}$, then $v=(v_n)_\u$ for some bounded sequence $(v_n)_{n\in \mathbb{N}}$ with $v_n\in M(\sigma^{\varphi},F)$, whence $v_n^*\in M(\sigma^{\varphi},-F)$ for every $n$, which implies $w=(v_n^*)_{\cU}\in M(\sigma^{\varphi},-F)^{\cU}$, a contradiction. 

By Lemma \ref{lem partial F contains nonzero}, it remains to consider the cases $F=\{0\}, (-\infty,0]$, or $[0,\infty)$.  However, these cases can be handled in a similar way.  
For example, for \(F=\{0\}\) or \(F=[0,\infty)\), choose
\[
 I_n:=\left[-\frac{3}{2n},-\frac{1}{2n}\right].
\]
Then \(-1/n\in \operatorname{int}(I_n)\), \(I_n\cap F=\emptyset\), and the same construction using isometries
\[
 v_n\in M^{\cU}(\sg^{\vp^{\cU}},\{-1/n\})
\]
produces \(w\in M^{\cU}(\sg^{\vp^{\cU}},\{0\})\subseteq M^{\cU}(\sg^{\vp^{\cU}},F)\), while the same orthogonality argument gives
\[
 w\notin M(\sg^\vp,F)^{\cU}.
\]
Then the case \(F=(-\infty,0]\) follows by taking adjoints from the case \([0,\infty)\).
% If $F=\{0\}$ or $[0,\infty)$, then argue as above using an isometry $v_n\in M^{\cU}(\sigma^{\varphi^{\cU}},\{-\frac{1}{n}\})$ to get $w\in M^{\cU}(\sigma^{\varphi^{\cU}},F)\setminus M(\sigma^{\varphi},F)^{\cU}$.  By taking adjoints as in the previous paragraph, one can also conclude the result for the case $F=(-\infty,0]$.
\end{proof}

\begin{remark}
The strict inclusion $M(\sigma^\varphi,\{0\})^\u\subsetneq M^\u(\sigma^{\varphi^\u},\{0\})$ was observed by Haagerup and the first author in \cite[Remark 4.28]{AH14} in the case that $\varphi$ is an \textbf{ergodic} state, meaning that $M_\varphi=\bb C$.  (By a recent result of Marrakchi and Vaes \cite{marrakchivaes2024ergodic}, every separable type III$_1$ factor $M$ admits such an ergodic state; in fact, ergodic states are comeager in the space of all faithful normal states on $M$.)  The previous theorem is thus new in the case that $F=\{0\}$ but $\varphi$ is a non-ergodic state on $M$.  
\end{remark}

\bibliographystyle{siam}
\bibliography{reference}

\end{document}